\documentclass[12pt]{article}
\usepackage{amsfonts}
\usepackage{latexsym}
\usepackage{cite}
\usepackage{amsmath,amsfonts,latexsym,amssymb}
\usepackage[mathscr]{eucal}
\usepackage{cases}
\usepackage{amsthm}

\usepackage[bf,small]{caption2}
\usepackage{float}
\usepackage{graphicx}
\usepackage{amsmath}
\usepackage{amssymb}
\usepackage[all]{xy}

\newtheorem{theorem}{theorem}[section]
\newtheorem{thm}[theorem]{Theorem}
\newtheorem{lem}[theorem]{Lemma}

\newtheorem{prop}[theorem]{Proposition}

\newtheorem{alg}[theorem]{Algorithm}

\newtheorem{defn}[theorem]{Definition}

\begin{document}

\title{\textbf{Lifting automorphisms along abelian regular coverings of graphs}}
\author{\Large Haimiao Chen
\footnote{Email: \emph{chenhm@math.pku.edu.cn}}\\
\normalsize \em{Department of mathematics, Peking University, Beijing, China}}
\date{}
\maketitle

\begin{abstract}
This article proposes an effective criterion for lifting automorphisms along regular coverings of graphs, with the covering
transformation group being any finite abelian group.
\end{abstract}

\emph{key words:} abelian regular covering, lifting isomorphism, criterion.

\emph{MSC2010:} 05C10, 05C25.

\section{Introduction}

First let us recall some terminologies in graph theory. For more one can refer to \cite{Crite, Voltage, Graph, Lift, Invar}.

We assume that all graphs are finite, connected, simple (that is, having no loops or parallel edges) and undirected. More precisely, a graph is viewed as a finite one-dimensional simplicial complex (see \cite{AT}, p.104) with orientation forgotten. For a graph $\Gamma$, denote an edge connecting the vertices $u,v$ by $\{u,v\}$; each edge $\{u,v\}$ gives rise to a pair of opposite arcs $(u,v)$ and $(v,u)$. Use $V(\Gamma), E(\Gamma), A(\Gamma)$ to denote the set of vertices, edges, arcs of $\Gamma$, respectively, and use $\textrm{Aut}(\Gamma)$ for the automorphism group.

A \emph{covering} of a graph $\pi:\tilde{\Gamma}\rightarrow\Gamma$ is a simplicial covering map of one-dimensional simplicial complexes. It is called \emph{regular} (or an \emph{$A$-covering}) if there is a subgroup $A$ of the covering transformation group $K$ that acts regularly on each fiber. Moreover, $A=K$ if $\Gamma$ is connected.

Let $A$ be a finite group. An \emph{$A$-voltage assignment} on $\Gamma$ is a function $\phi:A(\Gamma)\rightarrow A$ such that $\phi(u,v)=\phi(v,u)^{-1}$, for all $(u,v)\in A(\Gamma)$; the values of $\phi$ are called \textit{voltages}. The graph $\tilde{\Gamma}=\Gamma\times_{\phi}A$ derived from $\phi$ is defined by $V(\tilde{\Gamma})=V(\Gamma)\times A$, and $E(\tilde{\Gamma})=\{\{(u,g),(v,g\phi(u,v))\}\colon\{u,v\}\in E(\Gamma),g\in A\}$. The projection onto the first coordinate $\pi:\tilde{\Gamma}=\Gamma\times_{\phi}A\rightarrow\Gamma$ defines an $A$-covering.

Given a spanning tree $T$ of $\Gamma$, a \emph{cotree} edge is one not belonging to $T$. A voltage assignment $\phi$ is called \emph{$T$-reduced} if the voltage associated to each tree arc is the identity. It was shown in \cite{Voltage} that every regular covering of $\Gamma$ is isomorphic to a derived graph arising from a $T$-reduced voltage assignment with respect to an arbitrary spanning tree $T$.
The voltage assignment $\phi$ naturally extends to walks in $\Gamma$. For any walk $W$, let $\phi(W)$ denote the voltage of $W$.

An automorphism $\alpha\in\textrm{Aut}(\Gamma)$ is \emph{lifted} to $\tilde{\alpha}\in\textrm{Aut}(\tilde{\Gamma})$ if $\pi\tilde{\alpha}=\alpha\pi$.

In this paper we consider the following decision problem:
Given a graph $\Gamma$, a regular covering $\Gamma\times_{\phi}A$ with $A$ a finite abelian group, and an automorphism $\alpha$ of $\Gamma$, decide whether $\alpha$ can be lifted.

The following proposition of \cite{Lift} plays a key role:
\begin{prop} \label{prop1}
Let $\tilde{\Gamma}=\Gamma\times_{\phi}A$ be a regular covering. Then an automorphism $\alpha$ of $\Gamma$ can be lifted to an automorphism of $\tilde{\Gamma}$ if and only if, for each closed walk $W$ in $\Gamma$, one has $\phi(\alpha(W))=0$ if $\phi(W)=0$.
\end{prop}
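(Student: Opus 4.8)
The plan is to prove both implications: the forward one directly by lifting a closed walk, and the converse by reconstructing a lift explicitly from the way $\alpha$ acts on voltages. Throughout I fix a base vertex $v_0$ and may assume $\Gamma$ connected. Set $A_0=\{\phi(W):W\text{ a closed walk based at }v_0\}$, the \emph{local voltage group}; because $A$ is abelian, conjugating a closed walk by a walk $Q$ leaves its voltage unchanged, so $A_0$ is a subgroup of $A$ independent of the base vertex, and its cosets in $A$ index the connected components of $\tilde\Gamma$. For necessity, suppose $\alpha$ lifts to $\tilde\alpha$ and let $W$ be closed with $\phi(W)=0$. Lifting $W$ to $\tilde\Gamma$ produces a walk $\tilde W$ that is closed precisely because $\phi(W)=0$; then $\tilde\alpha(\tilde W)$ is again closed, and $\pi\tilde\alpha=\alpha\pi$ shows it covers $W^{\alpha}$. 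A closed walk in the cover projects to a closed walk of trivial voltage, so $\phi(W^{\alpha})=0$.

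For the converse the first step is to notice that the stated hypothesis is exactly what makes the assignment $\phi(W)\mapsto\phi(W^{\alpha})$ well defined: if $\phi(W_1)=\phi(W_2)$ then $\phi(W_1W_2^{-1})=0$, hence $\phi\bigl((W_1W_2^{-1})^{\alpha}\bigr)=0$, and therefore $\phi(W_1^{\alpha})=\phi(W_2^{\alpha})$. This gives a homomorphism $f\colon A_0\to A$ with $f(\phi(W))=\phi(W^{\alpha})$. Since $\alpha$ maps closed walks at $v_0$ bijectively onto closed walks at $\alpha(v_0)$, the image of $f$ is the local group at $\alpha(v_0)$, which by base-point independence is $A_0$ again; thus $f$ is a surjective endomorphism of the finite group $A_0$, hence an automorphism of $A_0$.

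Next I would build the lift from $f$. Choose a walk $P_u$ from $v_0$ to each vertex $u$ (with $P_{v_0}$ trivial) and a bijection $\tau\colon A\to A$ with $\tau(x+a)=\tau(x)+f(a)$ for all $a\in A_0$; such a $\tau$ exists because $f$ permutes $A_0$, for instance $\tau(s+a)=s+f(a)$ for $s$ ranging over a set of coset representatives of $A_0$ in $A$ and $a\in A_0$. Then define
$$\tilde\alpha(u,h)=\bigl(\alpha(u),\,\tau\bigl(h-\phi(P_u)\bigr)+\phi(P_u^{\alpha})\bigr).$$
The verification has three parts: independence of the chosen paths (the ambiguity between two choices of $P_u$ is a closed walk, and the defining relation for $\tau$ together with $f(\phi(W))=\phi(W^{\alpha})$ exactly cancels it); preservation of edges (take $P_v=P_u\cdot(u,v)$ and compare second coordinates, the difference being $\phi((u,v)^{\alpha})$); and bijectivity (immediate from that of $\tau$ and $\alpha$). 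Since $\pi\tilde\alpha=\alpha\pi$ by construction, $\tilde\alpha$ is the required lift.

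The hard part is this converse, and within it the passage from the single automorphism $f$ of $A_0$ to an automorphism of the possibly disconnected cover. The subtlety is that $A_0$ need not be a direct summand of $A$, so $f$ cannot in general be extended to an automorphism of $A$; the equivariant bijection $\tau$ is the device that avoids this, permuting the components (the cosets of $A_0$) while acting through $f$ along the fibre direction. It is precisely in checking path-independence of $\tilde\alpha$ that the hypothesis $\phi(W)=0\Rightarrow\phi(W^{\alpha})=0$ is used in full.
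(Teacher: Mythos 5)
The paper itself offers no proof of this proposition --- it is quoted verbatim from Malni\v{c} \cite{Lift} and used as a black box --- so there is nothing in-paper to compare your argument against; what you have done is supply a self-contained proof, and it is correct. Your necessity direction is the standard unique-lifting computation: the lift of a walk $W$ starting at $(u,g)$ terminates at the fibre point shifted by $\phi(W)$, so closedness of $\tilde\alpha(\tilde W)$ forces $\phi(W^{\alpha})=0$. Your sufficiency direction is the substantive part, and the two key moves --- (i) the hypothesis is exactly well-definedness of the induced map $f\colon\phi(W)\mapsto\phi(W^{\alpha})$ on the local voltage group $A_{0}$, which is then automatically a surjective, hence bijective, endomorphism of the finite group $A_{0}$; and (ii) the $f$-equivariant bijection $\tau$ of $A$ that handles the case where $A_{0}$ is a proper subgroup (disconnected cover) without needing to extend $f$ to all of $A$ --- are both sound, and I checked that your formula $\tilde\alpha(u,h)=(\alpha(u),\tau(h-\phi(P_u))+\phi(P_u^{\alpha}))$ is independent of $P_u$ and carries edges to edges exactly as you claim. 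This is essentially Malni\v{c}'s construction specialized to abelian voltage groups, where commutativity buys you base-point independence of $A_{0}$ and the clean difference identities you use. Two small points you should make explicit if you write this up: you silently assume $\Gamma$ connected (reasonable given the paper's standing hypotheses, but the reduction deserves a sentence, since $\alpha$ may permute components), and the final step that an edge-preserving vertex bijection of a finite simple graph is an automorphism should invoke finiteness to get surjectivity on edges.
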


Recently, much attention has been paid to the automorphism lifting problem. The motivation was to construct highly transitive graphs; moreover, the lifting conditions are frequently needed in classifying symmetries of certain infinite families of graphs. In \cite{Crite}, a linear criteria is proposed for the case when $A$ is elementary abelian, and applied to classify arc-transitive $A$-coverings of the Petersen graph. In \cite{Invar} the authors classify all vertex-transitive elementary abelian coverings of the Petersen graph; the method, developed in \cite{Elem}, reduces the problem to that of finding invariant subspaces of a group representation. See also \cite{K5, Pappus}, and the references therein.

To the automorphism lifting problem for arbitrary finite abelian groups, we contribute a criterion based on diagonalization of matrices over the ring
$\mathbb{Z}/p^{k}\mathbb{Z}$. The criterion is easy to run in that one only needs to compute some matrices derived from input data, and check whether the entries of the
resulting matrices satisfy certain conditions.

Properties of matrices over $\mathbb{Z}/p^{k}\mathbb{Z}$ that are relevant in the above context, are discussed in Section 2. This is applied in Section 3 in order to derive a criterion for lifting automorphisms. In Section 4, a practical algorithm is presented and illustrated on an example.

\section{On matrices over $\mathbb{Z}/p^{k}\mathbb{Z}$}

We start with some notational conventions. Operations in abelian groups are written additively. For a matrix $C$, denote its $(i,j)$-entry by $C_{i,j}$, and we often write $C=(C_{i,j})$.
For a ring $\mathcal{R}$, let $\mathcal{M}_{m\times n}(\mathcal{R})$ be the set of all $m\times n$-matrices with entries in $\mathcal{R}$. Let $\mathcal{M}_{n}(\mathcal{R})=\mathcal{M}_{n\times n}(\mathcal{R})$, and let $\textrm{GL}(n,\mathcal{R})$ be the set of invertible ones.

Let $p$ be a prime number, and $R=\mathbb{Z}/p^{k}\mathbb{Z}$.
We can speak about the divisibility of an element $\lambda\in R$ by $p^{r},0\leqslant r<k$, without ambiguity. Note that $\lambda$ is invertible if and only if it is not divisible by $p$.

\begin{defn}
\rm For $0\neq\lambda\in R$, the \emph{$p$-degree} of $\lambda$, denoted $d_{p}(\lambda)$, is the largest integer $r,0\leqslant r<k$, such that $p^{r}|\lambda$. By convention set $d_{p}(0)=k$.
\end{defn}
\begin{defn}
\rm A nonzero matrix $X=(X_{i,j})\in\mathcal{M}_{m\times n}(R)$ is \emph{in normal form} if $X_{i,j}=\delta_{i,j}\cdot p^{r_{i}}$ for some integers $r_{1},\cdots,r_{m}$, with $0\leqslant r_{1}\leqslant\cdots\leqslant r_{l}<k=r_{l+1}=\cdots =r_{m}$ for some $l\leqslant \min\{m,n\}$.
\end{defn}
\begin{lem}
For each $0\neq X\in\mathcal{M}_{m\times n}(R)$, there exist $Q\in\emph{GL}(m,R),T\in\emph{GL}(n,R)$ such that $QXT$ is in normal form.
\end{lem}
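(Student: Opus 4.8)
The plan is to adapt the row-and-column reduction that produces the Smith normal form over a Euclidean domain, using the $p$-degree $d_p$ in place of a Euclidean norm, and to argue by induction on $\min\{m,n\}$ (equivalently, peeling off one diagonal entry at a time). The crucial simplification relative to a general principal ideal domain is that in $R=\mathbb{Z}/p^{k}$ the divisibility relation is linearly ordered: writing any nonzero $\lambda$ as $u\,p^{r}$ with $u$ a unit and $r=d_{p}(\lambda)$, one checks immediately that $\lambda\mid\mu$ whenever $d_{p}(\lambda)\leqslant d_{p}(\mu)$. Hence an entry of smallest $p$-degree automatically divides every other entry, and no Euclidean reduction step is ever required.

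Concretely, first I would locate an entry $X_{i_{0},j_{0}}\neq 0$ with $d_{p}(X_{i_{0},j_{0}})=r$ minimal among all nonzero entries, and move it to the $(1,1)$ position by a row swap and a column swap; these are realized by permutation matrices, which lie in $GL(m,R)$ and $GL(n,R)$. Since $X_{1,1}=u\,p^{r}$ with $u$ a unit, left-multiplication by $\mathrm{diag}(u^{-1},1,\dots,1)\in GL(m,R)$ makes the pivot equal to $p^{r}$. By minimality of $r$ every entry is divisible by $p^{r}$; in particular, for each $j>1$ there is $b_{j}\in R$ with $X_{1,j}=b_{j}\,p^{r}$ (existence is all that is needed, even though $b_{j}$ is not unique in the non-domain $R$), so subtracting $b_{j}$ times the first column from the $j$-th column sends $X_{1,j}$ to $0$. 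This is an elementary, hence invertible, column operation. Clearing the first row in this way and then clearing the first column by the analogous elementary row operations yields a matrix of block form $\mathrm{diag}(p^{r},X')$ with $X'\in\mathcal{M}_{(m-1)\times(n-1)}(R)$; because each operation only subtracts $R$-multiples of a row (or column) all of whose entries are divisible by $p^{r}$, every entry of $X'$ remains divisible by $p^{r}$.

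Finally I would invoke the induction hypothesis on $X'$ (or stop, when $X'=0$), obtaining $Q',T'$ with $Q'X'T'$ in normal form with diagonal exponents $r_{2}\leqslant r_{3}\leqslant\cdots$. Composing $\mathrm{diag}(1,Q')$ with the earlier left factors, and $\mathrm{diag}(1,T')$ with the right factors, produces $Q,T$ with $QXT=\mathrm{diag}(p^{r},p^{r_{2}},\dots)$; since every entry of $X'$ was divisible by $p^{r}$ we have $r\leqslant r_{2}$, so the exponents are nondecreasing, with the zero entries (exponent $k$ by convention) collected at the end, exactly as the normal form demands. The step requiring the most care is precisely the one that makes the whole argument go through smoothly: verifying that choosing a pivot of minimal $p$-degree forces divisibility of all remaining entries, and that this divisibility is stable under the clearing operations, so that the inductive hypothesis genuinely applies to $X'$.
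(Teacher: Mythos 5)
Your proposal is correct and follows essentially the same route as the paper: pick a nonzero entry of minimal $p$-degree, move it to the pivot position, normalize it to $p^{r}$ by a unit, use the fact that in $\mathbb{Z}/p^{k}$ minimal $p$-degree forces divisibility of all other entries to clear the first row and column by elementary operations, and recurse on the complementary block. The only (immaterial) difference is bookkeeping: the paper defers the clearing of the off-diagonal entries in the rows to a final pass of column transformations, whereas you clear both the first row and first column at each stage before recursing.
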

\begin{proof}
Choose an entry $X_{i_{1},j_{1}}\neq 0$ with smallest $p$-degree, and suppose $X_{i_{1},j_{1}}=p^{r_{1}}\cdot\chi$ with $\chi$ invertible. Interchange the $i_{1}$-th row of $X$ with the first row, and the $j_{1}$-th column with the first column, and divide the first row by $\chi$. The matrix obtained has $(1,1)$-entry $p^{r_{1}}$, dividing all entries. Then perform row-transformations to eliminate the $(i,1)$-entries for $1<i\leqslant m$. Denote the new matrix by $X^{(1)}$.

Do the same thing to the $(m-1)\times(n-1)$ down-right minor of $X^{(1)}$, and then go on. At each step, we can take row transformations to eliminate elements of a submatrix in a column below the main diagonal, and rearrange the diagonal elements. At the $l$-th step, for some $l\leqslant\min\{m,n\}$, we get a matrix $X^{(l)}$ such that the $i$-th diagonal entry is $p^{r_{i}}$ for $1\leqslant i\leqslant m$, with $r_{1}\leqslant\cdots\leqslant r_{m}$, and the elements under the main diagonal all vanish.

Finally, perform column transformations to $X^{(l)}$, to eliminate all the ``off-diagonal" entries. Then the resulting matrix is in normal form.
\end{proof}

\section{The criterion for lifting automorphisms}

From now on we make the followings assumptions regarding a connected graph $\Gamma$ and an $A$-voltage assignment on $\Gamma$.
\begin{enumerate}
\item Let $A=\prod\limits_{\gamma=1}^{g}A_{\gamma}$ denote an abelian group, with $A_{\gamma}=\prod\limits_{\eta=1}^{n_{\gamma}}\mathbb{Z}/p_{\gamma}^{k(\gamma,\eta)}\mathbb{Z}$, where the prime numbers $p_{1},\cdots,p_{g}$ are distinct, and $k(\gamma,1)\leqslant\cdots\leqslant k(\gamma,n_{\gamma})$ for each $\gamma$.
\item For each edge $e\in E(\Gamma)$, one of its arcs is chosen and denoted by $h(e)$.
\item A spanning tree $T$ and a vertex $v_{0}\in V(T)$ are chosen.
For $v_{1},v_{2}\in V(\Gamma)=V(T)$, let $W(v_{1},v_{2})$ denote the unique reduced walk in $T$ from $v_{1}$ to $v_{2}$.
Denote by $e_{1},\cdots,e_{m}$ the cotree edges. For each $e_{i}$ denote by $u_{i}$ and $w_{i}$ the head and the tail of $h(e_{i})$, respectively. Let $L_{i}$ be the unique closed reduced walk based at $v_{0}$ and containing $h(e_{i})$, that is, $L_{i}=W(v_{0},u_{i})\cup h(e_{i})\cup W(w_{i},v_{0})$.
\item Let $\phi$ be a $T$-reduced voltage assignment on $\Gamma$ relative to $T$ and $v_{0}$, so $\phi(h(e))=0$ for all $e\in E(T)$.
\end{enumerate}
\vspace{3mm}

It is well-known that the first homology group $H_{1}(\Gamma;\mathbb{Z})$ is the free abelian group on the set $\{L_{1},\cdots,L_{m}\}$ (see \cite{AT}, p. 43).
The voltage assignment $\phi$ induces a group homomorphism
\begin{align}
\phi^{\ast}:H_{1}(\Gamma;\mathbb{Z})\rightarrow A,
\end{align}
which is surjective by the assumption of connectedness.

For $1\leqslant\gamma\leqslant g$, let $R_{\gamma}=\mathbb{Z}/p_{\gamma}^{k(\gamma,n_{\gamma})}\mathbb{Z}$, considered as a ring. Let $R=\mathbb{Z}/r\mathbb{Z}$, with $r=\prod\limits_{\gamma=1}^{g}p_{\gamma}^{k(\gamma,n_{\gamma})}$.
There are obvious projections $q_{\gamma}:R\twoheadrightarrow R_{\gamma}$ and injections $\iota_{\gamma}:R_{\gamma}\hookrightarrow R$. The projections $q_{\gamma}$ define the canonical isomorphism $q:R\rightarrow\prod\limits_{\gamma=1}^{g}R_{\gamma}$. Abusing the notation, we denote the induced projections $R^{m}\twoheadrightarrow R_{\gamma}^{m}$, injections $R_{\gamma}^{m}\hookrightarrow R^{m}$ and the isomorphism $R^{m}\cong\prod\limits_{\gamma=1}^{g}R_{\gamma}^{m}$ also by $q_{\gamma}, \iota_{\gamma}$ and $q$, respectively.

Note that for all $1\leqslant\eta\leqslant n_{\gamma}$ there is a monomorphism
\begin{align}
\mathbb{Z}/p_{\gamma}^{k(\gamma,\eta)}\mathbb{Z}\hookrightarrow R_{\gamma},\hspace{5mm}
\lambda\mapsto p_{\gamma}^{k(\gamma,n_{\gamma})-k(\gamma,\eta)}\cdot\lambda. \label{eq}
\end{align}

For each $\gamma$, let $\sigma_{\gamma}:A\twoheadrightarrow A_{\gamma}$ and $\tau_{\gamma}:A_{\gamma}\hookrightarrow A$ denote the canonical projection and injection, respectively.

The $p_{\gamma}$ parts of 
$\sigma_{\gamma}(\phi(L_{i}))$ are interpreted as elements of $R_{\gamma}^{n_{\gamma}}$ via (\ref{eq}),
and placed in rows in order to form a nonzero matrix $B_{\gamma}$.

Each automorphism $\alpha\in\textrm{Aut}(\Gamma)$ induces an automorphism
\begin{align}
\alpha_{\ast}:H_{1}(\Gamma;\mathbb{Z})\rightarrow H_{1}(\Gamma;\mathbb{Z});
\end{align}
it is represented by a matrix $S^{\alpha}\in\textrm{GL}(m,\mathbb{Z})$ given by
\begin{align}
\alpha_{\ast}(L_{i})=\sum\limits_{j=1}^{m}(S^{\alpha})_{i,j}L_{j}.
\end{align}

By the monomorphism (\ref{eq}), the voltage group is extended and hence the covering graph is also extended. However, the answer to whether $\alpha$ can be lifted does not change, since by Proposition \ref{prop1}, the condition for lifting $\alpha$ is $\ker\phi^{\ast}=\ker(\phi^{\ast}\circ\alpha_{\ast})$.

In our notations, graph automorphisms are composed on the left, but when representing the action of the first homology group, matrices are composed on the right,
acting on row vectors.

\begin{defn}
\rm For a ring $\mathcal{R}$, and a matrix $X\in\mathcal{M}_{m\times n}(\mathcal{R})$, the \emph{zero set of $X$ in $\mathcal{R}^{m}$}, denoted $Z(X)$, is the set of the row vectors $a=(a_{1},\cdots,a_{m})\in \mathcal{R}^{m}$ satisfying $aX=0$, or explicitly, $\sum\limits_{i=1}^{m}a_{i}X_{i,j}=0, 1\leqslant j\leqslant n$.
\end{defn}

\begin{prop} \label{prop}
The automorphism $\alpha$ can be lifted if and only if
\begin{align}
Z(S^{\alpha}B_{\gamma})=Z(B_{\gamma})\ \text{in\ }R_{\gamma}^{m}, \hspace{5mm} 1\leqslant\gamma\leqslant g,
\end{align}
where $S^{\alpha}$ is regarded as in $\emph{GL}(m,R_{\gamma})$ via the quotient map $\mathbb{Z}\twoheadrightarrow R_{\gamma}$.
\end{prop}
\begin{proof}
For $1\leqslant\gamma\leqslant g$, put $\phi_{\gamma}^{\ast}=\sigma_{\gamma}\circ\phi^{\ast}$.

Identify $H_{1}(\Gamma;\mathbb{Z})$ with $\mathbb{Z}^{m}$. For $a=(a_{1},\cdots,a_{m})\in\mathbb{Z}^{m}$, clearly $a$ belongs to $\ker\phi^{\ast}$ whenever $r|a_{i}, 1\leqslant i\leqslant m$. Hence there is an induced map $\overline{\phi^{\ast}}:(\mathbb{Z}/r\mathbb{Z})^{m}\rightarrow A$. Similarly there are induced maps $\overline{\phi_{\gamma}^{\ast}}:R_{\gamma}^{m}\rightarrow A_{\gamma}$, $1\leqslant\gamma\leqslant g$.

We have the following commuting diagrams
\begin{align*}
\xymatrix{
R^{m}\ar[d]_{\overline{\phi^{\ast}}}\ar[r]^{q_{\gamma}} &R_{\gamma}^{m}\ar[d]^{\overline{\phi_{\gamma}^{\ast}}}\\
A\ar[r]^{\sigma_{\gamma}} &A_{\gamma}
}
\hspace{15mm}\xymatrix{
R^{m}\ar[d]_{\overline{\phi^{\ast}}} &R_{\gamma}^{m}\ar[d]^{\overline{\phi^{\ast}_{\gamma}}}\ar[l]_{\iota_{\gamma}}\\
A &A_{\gamma}\ar[l]_{\tau_{\gamma}}
}
\end{align*}
from which it follows that $q_{\gamma}(\ker\overline{\phi^{\ast}})\subseteq\ker\overline{\phi^{\ast}_{\gamma}}$, and $\iota_{\gamma}(\ker\overline{\phi^{\ast}_{\gamma}})\subseteq\ker\overline{\phi^{\ast}}$.
Hence the isomorphism $q$ sends $\ker\overline{\phi^{\ast}}$ onto $\prod\limits_{\gamma=1}^{g}\ker\overline{\phi^{\ast}_{\gamma}}$.

Replacing $\phi^{\ast}$ with $\phi^{\ast}\circ\alpha_{\ast}$, the same argument leads to the conclusion that, the isomorphism $q$ sends $\ker(\overline{\phi^{\ast}\circ\alpha_{\ast}})$ onto $\prod\limits_{\gamma=1}^{g}\ker(\overline{\phi^{\ast}_{\gamma}\circ\alpha_{\ast}})$.

Thus $\ker\phi^{\ast}=\ker(\phi^{\ast}\circ\alpha_{\ast})$ if and only if $\ker\overline{\phi^{\ast}}=\ker(\overline{\phi^{\ast}\circ\alpha_{\ast}})$, which is equivalent to $\ker\overline{\phi^{\ast}_{\gamma}}=\ker(\overline{\phi^{\ast}_{\gamma}\circ\alpha_{\ast}})$ for all $\gamma$.

Since by Proposition \ref{prop1}, $\alpha$ can be lifted if and only if $\ker\phi^{\ast}=\ker(\phi^{\ast}\circ\alpha_{\ast})$, and by definition, $\ker\overline{\phi^{\ast}_{\gamma}}=Z(B_{\gamma})$ and $\ker(\overline{\phi^{\ast}_{\gamma}\circ\alpha_{\ast}})=Z(S^{\alpha}B_{\gamma})$, the proposition is established.
\end{proof}

\vspace{4mm}

Now we characterize the condition $Z(S^{\alpha}B_{\gamma})=Z(B_{\gamma})$ more concretely.

Fix $\gamma\in\{1,\cdots,g\}$. Simplify notations by $p=p_{\gamma}$, $n=n_{\gamma}$, $k=k(\gamma,n_{\gamma})$, $B=B_{\gamma}$, $R'=R_{\gamma}$.

For a matrix $X\in\mathcal{M}_{l\times m}(R')$, let $\langle X\rangle$ denote the subgroup of $R'^{m}$ generated by the row vectors of $X$.

Since $B\in\mathcal{M}_{m\times n}(R')$, by Lemma 2.3 one can choose matrices $Q\in\textrm{GL}(m,R')$ and $T\in\textrm{GL}(n,R')$ so that $QBT=B^{0}$ is in normal form; let $(B^{0})_{i,j}=\delta_{i,j}\cdot p^{s_{i}}$. Recall that $s_{i}=k$ for all $i>l$, for some $l\leqslant\min\{m,n\}$.

Suppose $i_{0}$ is the smallest $i$ with $s_{i}>0$. It is easy to see that $Z(B^{0})=\langle P\rangle$, where
$P\in\mathcal{M}_{(m-i_{0}+1)\times m}(R')$ with $P_{i,j}=\delta_{i+i_{0}-1,j}\cdot p^{k-s_{j}}$.

\begin{lem} \label{lem}
$Z(S^{\alpha}B)=Z(B)$ if and only if $d_{p}((QS^{\alpha}Q^{-1})_{i,j})\geqslant s_{i}-s_{j}$ for all $1\leqslant j<i\leqslant m$.
\end{lem}
\begin{proof}
Since for all $a\in R^{m}$, $aB=0$ if and only if $aQ^{-1}B^{0}=0$, we have $Z(B)=\langle PQ\rangle$. Similarly, $Z(S^{\alpha}B)=\langle PQ(S^{\alpha})^{-1}\rangle$. Hence $Z(S^{\alpha}B)=Z(B)$ is equivalent to $\langle P(QS^{\alpha}Q^{-1})\rangle=\langle P\rangle$; this is further equivalent to $\langle P(QS^{\alpha}Q^{-1})\rangle\leqslant\langle P\rangle$, because $b\mapsto b(QS^{\alpha}Q^{-1})$ defines an isomorphism $\langle P\rangle\cong\langle P(QS^{\alpha}Q^{-1})\rangle$, and the two subgroups are finite.

The row vectors $(a_{1},\cdots,a_{m})$ in $\langle P\rangle$ are characterized by the property
$$d_{p}(a_{j})\geqslant k-s_{j},\hspace{5mm} 1\leqslant j\leqslant m.$$
Hence, by considering the $i$-th row of $P(QS^{\alpha}Q^{-1})$,
$\langle P(QS^{\alpha}Q^{-1})\rangle\leqslant\langle P\rangle$ is equivalent to
$$d_{p}(p^{k-s_{i+i_{0}-1}}\cdot(QS^{\alpha}Q^{-1})_{i+i_{0}-1,j})\geqslant k-s_{j}, \hspace{5mm} 1\leqslant i\leqslant m-i_{0}+1,1\leqslant j\leqslant m,$$
which is equivalent to $d_{p}((QS^{\alpha}Q^{-1})_{i,j})\geqslant s_{i}-s_{j}$ when $i>j$ and $i\geqslant i_{0}$; the same inequality holds for free when $j<i<i_{0}$ , since $s_{i}=s_{j}=0$.
\end{proof}

Combining Proposition \ref{prop} and Lemma \ref{lem}, we establish the main result.
\begin{thm}
For $\gamma\in\{1,\cdots,g\}$, choose matrices $Q_{\gamma}\in\textrm{GL}(m,R_{\gamma})$ and $T_{\gamma}\in\textrm{GL}(n_{\gamma},R_{\gamma})$ so that $Q_{\gamma}B_{\gamma}T_{\gamma}$ is
in normal form, $(Q_{\gamma}B_{\gamma}T_{\gamma})_{i,j}=\delta_{i,j}\cdot p_{\gamma}^{s_{\gamma,i}}$. Then $\alpha$ can be lifted
if and only if $d_{p_{\gamma}}((Q_{\gamma}S^{\alpha}Q_{\gamma}^{-1})_{i,j})\geqslant s_{\gamma,i}-s_{\gamma,j}$ for all $\gamma$ and $1\leqslant j<i\leqslant m$.
\end{thm}

\section{Conclusion and example}

Summarizing the last section, we write down the algorithm.

\begin{alg}
\rm Assume the conditions presented in the beginning of Section 3.1.
\begin{enumerate}
\item For $1\leqslant\gamma\leqslant g$, form the matrices $B_{\gamma}$ from the voltages $\phi(L_{i})\in A$.
\item Compute the matrix $S^{\alpha}$ representing the action of $\alpha$ on $H_{1}(\Gamma;\mathbb{Z})$, that is, $\alpha_{\ast}(L_{i})=\sum\limits_{j=1}^{m}(S^{\alpha})_{i,j}L_{j}$.
\item For each $\gamma$, choose matrices $Q_{\gamma}$ and $T_{\gamma}$ to normalize $B_{\gamma}$ so that $(Q_{\gamma}B_{\gamma}T_{\gamma})_{i,j}=\delta_{i,j}\cdot p_{\gamma}^{s_{\gamma,i}}$, with $s_{\gamma,1}\leqslant\cdots\leqslant s_{\gamma,m}$.
\item If $d_{p_{\gamma}}((Q_{\gamma}S^{\alpha}Q_{\gamma}^{-1})_{i,j})\geqslant s_{\gamma,i}-s_{\gamma,j}$, for all $\gamma$ and all $1\leqslant j<i\leqslant m$,
then $\alpha$ can be lifted. Otherwise not.
\end{enumerate}
\end{alg}

We now illustrate this algorithm by an example.

Take $\Gamma$ to be the Petersen graph. The vertices can be labeled by unordered pairs of elements from $X=\{a,b,c,d,e\}$. Let $V=\{0,1,2,3,4,5,6,7,8,9\}$, where $0=\{a,b\}$, $1=\{c,d\}$, $2=\{c,e\}$, $3=\{d,e\}$, $4=\{a,e\}$, $5=\{b,e\}$, $6=\{a,d\}$, $7=\{b,d\}$, $8=\{a,c\}$, $9=\{b,c\}$. The Petersen graph has $V(\Gamma)=V$ and $E(\Gamma)=\{\{i,j\}\colon i,j\in V,i\cap j=\emptyset\}$. It is known that $\textrm{Aut}(\Gamma)\cong S_{5}$, the action being induced from that on $X$.

Fix a spanning tree $T$ as in Figure 1(a), $v_{0}=0$; the induced subgraph is shown in (b). For each cotree edge $\{i,j\}$, choose the arc $(i,j)$ with $i<j$. Let $h_{1}=(5,8),h_{2}=(7,8),h_{3}=(4,7),h_{4}=(4,9),h_{5}=(6,9),h_{6}=(5,6)$.

Take $A=\mathbb{Z}/2\mathbb{Z}\times\mathbb{Z}/2\mathbb{Z}\times\mathbb{Z}/4\mathbb{Z}$.

Define the $T$-reduced voltage assignment $\phi$ by $\phi(h_{1})=(1,1,1)$, $\phi(h_{2})=(1,0,2)$, $\phi(h_{3})=(1,1,2)$, $\phi(h_{4})=(1,0,3)$, $\phi(h_{5})=(1,1,0)$, $\phi(h_{6})=(1,0,0)$.

We shall decide whether the following automorphisms can be lifted:
\begin{align*}
\alpha_{1}&=(0)(2)(13)(67)(49)(58), \\
\alpha_{2}&=(4)(7)(19)(56)(28)(03), \\
\alpha_{3}&=(0)(123)(468)(579),\\
\alpha_{4}&=(0)(1)(2)(3)(45)(67)(89).
\end{align*}
\begin{figure}[h]
  \centering
  \includegraphics[width=0.5\textwidth]{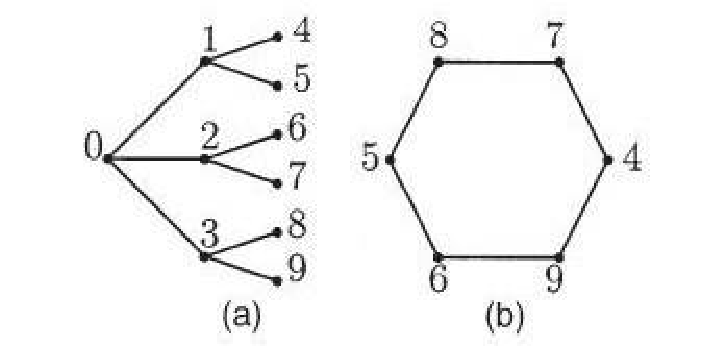}\\
  \caption{(a) The spanning tree $T$; (b) the induced subgraph}\label{1}
\end{figure}

The algorithm is run as follows.

1. We have $m=6$. From $\phi(h_{1}),\cdots,\phi(h_{6})$ we obtain $B=\left(\begin{array}{ccc}  2 & 2 & 1 \\ 2 & 0 & 2 \\ 2 & 2 & 2 \\  2 & 0 & 3 \\  2 & 2 & 0 \\ 2 & 0 & 0 \end{array}\right)$.

2. Determine the matrices $S_{k}=S^{\alpha_{k}}$ for $1\leqslant k\leqslant 4$:\\
$S_{1}=\left(\begin{array}{cccccc}
-1 & 0 & 0 & 0 & 0 & 0\\ 0 & 0 & 0 & 0 & 0 & -1 \\ 0 & 0 & 0 & 0 & -1 & 0 \\ 0 & 0 & 0 & -1 & 0 & 0 \\ 0 & 0 & -1 & 0 & 0 & 0 \\ 0 & -1 & 0 & 0 & 0 & 0 \end{array}\right)$,
$S_{2}=\left(\begin{array}{cccccc}
0 & 0 & 0 & 0 & -1 & 0\\ 0 & -1 & 0 & 0 & 0 & 0 \\ 0 & 1 & 1 & -1 & 0 & 0\\ 0 & 0 & 0 & -1 & 0 & 0\\ -1 & 0 & 0 & 0 & 0 & 0 \\ 1 & 0 & 0 & 0 & -1 & -1
\end{array}\right)$,\\
$S_{3}=\left(\begin{array}{cccccc}
0 & 0 & -1 & 0 & 0 & 0\\ 0 & 0 & 0 & -1 & 0 & 0 \\ 0 & 0 & 0 & 0 & 1 & 0 \\ 0 & 0 & 0 & 0 & 0 & -1 \\ -1 & 0 & 0 & 0 & 0 & 0 \\ 0 & 1 & 0 & 0 & 0 & 0 \end{array}\right)$,
$S_{4}=\left(\begin{array}{cccccc}
0 & 0 & 0 & 1 & 0 & 0\\ 0 & 0 & 0 & 0 & 1 & 0 \\ 0 & 0 & 0 & 0 & 0 & 1 \\ 1 & 0 & 0 & 0 & 0 & 0 \\ 0 & 1 & 0 & 0 & 0 & 0 \\ 0 & 0 & 1 & 0 & 0 & 0 \end{array}\right)$.

3. Normalize $B$ into
$B^{0}=\left(\begin{array}{ccc}  1 & 0 & 0 \\ 0 & 2 & 0 \\ 0 & 0 & 2 \\  0 & 0 & 0 \\  0 & 0 & 0 \\ 0 & 0 & 0 \end{array}\right)$
by $T=\left(\begin{array}{ccc}  0 & 0 & 1 \\ 0 & 1 & 0 \\ 1 & 0 & 0 \end{array}\right)$ and \\
$Q=\left(\begin{array}{cccccc}
3 & 2 & 1 & 0 & 0 & 0\\ 0 & 1 & 1 & 0 & 0 & 0 \\ 2 & 1 & 0 & 0 & 0 & 0 \\ 1 & 1 & 1 & 1 & 0 & 0 \\ 2 & 2 & 1 & 0 & 1 & 0 \\ 2 & 1 & 0 & 0 & 0 & 1 \end{array}\right)$. So
$Q^{-1}=\left(\begin{array}{cccccc}
1 & 3 & 3 & 0 & 0 & 0\\ 2 & 2 & 3 & 0 & 0 & 0 \\ 2 & 3 & 1 & 0 & 0 & 0 \\ 3 & 0 & 1 & 1 & 0 & 0 \\ 0 & 3 & 1 & 0 & 1 & 0 \\ 0 & 0 & 3 & 0 & 0 & 1 \end{array}\right)$,\\
and $s_{1}=0,s_{2}=s_{3}=1,s_{4}=s_{5}=s_{6}=2$.

4. Computing $QS_{k}Q^{-1},1\leqslant k\leqslant 4$, the results are

$QS_{1}Q^{-1}=\left(\begin{array}{cccccc}
1 & 0 & 0 & 0 & 3 & 2\\ 0 & 1 & 0 & 0 & 3 & 3 \\ 2 & 2 & 3 & 0 & 0 & 3 \\ 0 & 2 & 0 & 3 & 3 & 3 \\ 0 & 0 & 0& 0 & 3 & 2 \\ 0 & 0 & 0 & 0 & 0 & 3 \end{array}\right)$,
$QS_{2}Q^{-1}=\left(\begin{array}{cccccc}
1 & 0 & 0 & 1 & 0 & 0\\ 3 & 3 & 0 & 3 & 0 & 0 \\ 2 & 0 & 3 & 0 & 2 & 0 \\ 3 & 2 & 1 & 3 & 1 & 0 \\ 2 & 0 & 0 & 3 & 2 & 0 \\ 3 & 0 & 2 & 0 & 1 & 3 \end{array}\right)$,

$QS_{3}Q^{-1}=\left(\begin{array}{cccccc}
0 & 2 & 0 & 2 & 1 & 0\\ 1 & 3 & 0 & 3 & 1 & 0 \\ 1 & 2 & 1 & 3 & 0 & 0 \\ 3 & 0 & 0 & 3 & 1 & 3 \\ 1 & 2 & 0 & 2 & 1 & 0 \\ 3 & 0 & 0 & 3 & 0 & 0 \end{array}\right)$,
$QS_{4}Q^{-1}=\left(\begin{array}{cccccc}
1 & 2 & 0 & 3 & 2 & 3\\ 0 & 3 & 0 & 0 & 1 & 1 \\ 2 & 3 & 1 & 2 & 1 & 0 \\ 0 & 2 & 0 & 1 & 1 & 1 \\ 0 & 0 & 2 & 2 & 2 & 1 \\ 0 & 2 & 0 & 2 & 1 & 0 \end{array}\right)$.

For $k=1$ or $4$, the matrix $QS_{k}Q^{-1}$ satisfies $d_{p}((QS_{k}Q^{-1})_{i,j})\geqslant s_{i}-s_{j}$ for all $1\leqslant j<i\leqslant 6$.
But for $k=2$ or $3$, it does not; for instance, $(QS_{2}Q^{-1})_{2,1}=3,(QS_{3}Q^{-1})_{2,1}=1$.

We conclude that $\alpha_{1},\alpha_{4}$ can be lifted, while $\alpha_{2},\alpha_{3}$ cannot!



\end{document}